\numberwithin{equation}{section}
\theoremstyle{plain}
\newtheorem{thm}[equation]{Theorem}
\newtheorem{lemma}[equation]{Lemma}
\newtheorem{cor}[equation]{Corollary}
\newtheorem{prop}[equation]{Proposition}
\theoremstyle{definition}
\newtheorem{definition}[equation]{Definition}
\newtheorem{remark}[equation]{Remark}
\newcommand{\R}{\ensuremath \mathbb{R}}
\newcommand{\Z}{\ensuremath \mathbb{Z}}
\newcommand{\Ord}{\ensuremath \textsf{Ord}}
\begin{document}

\title[Transfinitely valued Euclidean domains]{Transfinitely valued Euclidean domains have arbitrary indecomposable order type}

\author{Chris J.\ Conidis}
\address{Department of Mathematics, College of Staten Island, The City University of New York, New York, NY 10314, USA}
\email{chris.conidis@csi.cuny.edu}

\author{Pace P.\ Nielsen}
\address{Department of Mathematics, Brigham Young University, Provo, UT 84602, USA}
\email{pace@math.byu.edu}

\author{Vandy Tombs}
\address{Department of Mathematics, Brigham Young University, Provo, UT 84602, USA}
\email{vandytombs@gmail.com}

\keywords{indecomposable ordinal, multiplicative norm, (transfinitely valued) Euclidean domain}
\subjclass[2010]{Primary 13F07, Secondary 13A05, 13B25, 13G05}

\begin{abstract}
We prove that for every indecomposable ordinal there exists a (transfinitely valued) Euclidean domain whose minimal Euclidean norm is of that order type.  Conversely, any such norm must have indecomposable type, and so we completely characterize the norm complexity of Euclidean domains.  Modifying this construction, we also find a finitely valued Euclidean domain with no multiplicative integer valued norm.
\end{abstract}

\maketitle

\section{Introduction and history}

The class of objects with which we will concern ourselves in this paper is defined as follows.

\begin{definition}\label{Definition:TED}
An integral domain $R$ is a (transfinitely valued) \emph{Euclidean domain} if there exists a map to the class of ordinals, $\varphi:R\setminus\{0\}\to \Ord$, called a \emph{Euclidean norm}, satisfying the property that for every pair of elements $n,d\in R$ with $d\neq 0$, either $d$ divides $n$, or there exist $q,r\in R$ such that $n=qd+r$ and $\varphi(r)<\varphi(d)$.
\end{definition}

These rings generalize the finitely valued Euclidean domains, and have appeared occasionally in the literature, beginning in 1949 with Motzkin's seminal work \cite{Motzkin}.  It might seem surprising that this generalization is not studied more frequently, given that such rings naturally share nearly every property that the finitely valued Euclidean domains carry.  For instance, each Euclidean domain is a P.I.D., and the usual Euclidean algorithm terminates in finite time; both of these facts are proved using the standard arguments.

There are at least two reasons that this generalized definition is not more widely used.  First, at least \emph{a priori}, it is not obvious that this new class of Euclidean domains has any new members.  To show the containment is proper, we need some method to distinguish between the two classes.  To that end, note that the image of any Euclidean norm is contained in some fixed ordinal.  Following \cite{Clark} we define the \emph{Euclidean order type} of a Euclidean domain $R$ to be
\[
\min_{\varphi}\{\alpha\in \Ord : \varphi(R\setminus\{0\})\subseteq \alpha\}
\]
where $\varphi$ ranges among all possible Euclidean norms on $R$.  The finitely valued Euclidean domains have two different possible Euclidean order types; fields have type $\omega^{0}=1$, and non-fields have type $\omega^{1}=\omega$.  Thus, a natural question is whether any Euclidean domain has order type greater than $\omega$. Indeed, this question continues to occasionally appear in the literature, even though there are two separate examples of order type $\omega^{2}$, discovered independently by Hiblot \cite{Hiblot1,Hiblot2} and Nagata \cite{Nagata}.  These were the only known examples previous to this work.  Our first main result is a classification of all Euclidean order types, and in particular we construct Euclidean domains with arbitrarily large order type.

\begin{thm}\label{Thm:Main}
For every ordinal $\alpha$, there exists a Euclidean domain whose Euclidean order type is $\omega^{\alpha}$.  Moreover, these are the only possible Euclidean order types for Euclidean domains.
\end{thm}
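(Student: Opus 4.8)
My plan is to separate the two assertions and route everything through Motzkin's \emph{minimal} Euclidean norm. Recall that on any Euclidean domain $R$ there is a canonical norm $\varphi\colon R\setminus\{0\}\to\Ord$, defined by the transfinite recursion in which $\varphi(x)\le\alpha$ exactly when every residue class modulo $x$ contains either $0$ or an element of norm $<\alpha$; this $\varphi$ is pointwise below every Euclidean norm, so its image realizes the Euclidean order type. Thus to classify order types it suffices to understand the image of $\varphi$.

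For the converse half (that $\omega^{\alpha}$ are the only possible types), the key lemma I would prove is a multiplicative lower bound for the minimal norm,
\[
\varphi(ab)\ \ge\ \varphi(a)+\varphi(b)\qquad(a,b\in R\setminus\{0\}),
\]
with addition being ordinal addition. I expect to establish this by transfinite induction on $\varphi(b)$, the point being that reducing a residue modulo $ab$ is never easier than first reducing modulo $a$ and then handling the resulting quotient structure modulo $b$; the base case $\varphi(b)=0$ (units) is immediate. Granting this, let $\beta$ be the order type, so that every norm value lies below $\beta$ while the values are cofinal in $\beta$. Given $\gamma,\delta<\beta$, choose achieved values $\gamma'\ge\gamma$ and $\delta'\ge\delta$ still below $\beta$, and elements $a,b$ with $\varphi(a)=\gamma'$, $\varphi(b)=\delta'$; then
\[
\gamma+\delta\ \le\ \gamma'+\delta'\ \le\ \varphi(ab)\ <\ \beta ,
\]
using monotonicity of ordinal addition together with $ab\neq0$. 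Hence $\beta$ is closed under ordinal addition from below, i.e.\ additively indecomposable, which is equivalent to $\beta=\omega^{\alpha}$ for some $\alpha$. (A nontrivial successor order type is ruled out by the same inequality applied to $a=b$ of maximal norm, which forces the field case $\beta=1=\omega^{0}$.)

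For existence I would build, for each $\alpha$, a Euclidean domain of order type $\omega^{\alpha}$ by transfinite recursion on $\alpha$. The base cases are a field ($\omega^{0}=1$) and $\Z$ or $k[x]$ ($\omega^{1}=\omega$). For the successor step I would design a ring operation $R\mapsto R'$ that multiplies the Euclidean order type by $\omega$, carrying type $\omega^{\alpha}$ to $\omega^{\alpha+1}=\omega^{\alpha}\cdot\omega$; this generalizes the pullback/twisting underlying the Hiblot--Nagata passage from $\omega$ to $\omega^{2}$, the idea being to stack $\omega$ many ``norm blocks'' of the previous domain while keeping the ring a P.I.D. For a limit ordinal $\lambda$ I would take a suitable direct limit of the $R_{\beta}$ (for $\beta<\lambda$) along norm-respecting embeddings, whose minimal norm should have order type $\sup_{\beta<\lambda}\omega^{\beta}=\omega^{\lambda}$.

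The main obstacle, at each inductive stage, is the \emph{lower} bound on the order type. Exhibiting \emph{some} Euclidean norm of type $\le\omega^{\alpha}$ pins down an upper bound and is the routine half; the difficulty is proving that no norm does better, equivalently that the minimal norm $\varphi$ actually takes values cofinally in $\omega^{\alpha}$. I expect this to require a careful analysis of residues in the constructed rings, producing for each target ordinal below $\omega^{\alpha}$ an explicit element all of whose reductions are forced to have large norm, and keeping this analysis uniform through the successor and limit stages is where the real work lies.
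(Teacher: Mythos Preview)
Your converse argument is sound and close to the paper's. The paper uses Lenstra's inequality $\tau(ab)\ge\tau(a)\oplus\tau(b)$ with the Hessenberg sum rather than your $\tau(ab)\ge\tau(a)+\tau(b)$, but the weaker inequality already suffices for indecomposability, and your cofinality-plus-closure argument is a legitimate alternative to the paper's contradiction via $\tau(x^{n_1+1})$.

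The existence half, however, has a genuine gap. Your recursive scheme leaves both the successor operation $R\mapsto R'$ and the limit-stage embeddings entirely unspecified, and these are not routine details. The Hiblot--Nagata examples are ad hoc; there is no known functorial passage from order type $\omega^{\alpha}$ to $\omega^{\alpha+1}$, and nothing in your outline suggests one. At limit stages, a direct limit of Euclidean domains need not be Euclidean (or even a P.I.D.), and ``norm-respecting embeddings'' would require the minimal norm on $R_{\beta'}$ to restrict to that on $R_{\beta}$---a compatibility you have no mechanism to enforce and which in general fails. You are right that the lower bound on $\rho$ is the crux, but your plan offers no concrete handle on it at either kind of stage.

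The paper sidesteps recursion on $\alpha$ altogether. For fixed $\alpha$ it builds $R$ in one pass as a localized union $\bigcup_{j}R_j$: begin with $R_0=F[x_{\{\beta\},0}:0<\beta<\omega^{\alpha}]$, and at each finite stage adjoin, for every coprime pair $(n,d)$ already present with $\varphi(n)\ge\varphi(d)\ge1$, a fresh ``quotient'' variable $q=y_{T,j+1,n,d}$, declaring $n-qd$ a \emph{special prime} whose $\varphi$-value is set by fiat to the largest ordinal in $T$ below $\varphi(d)$; extend $\varphi$ to products by Hessenberg sum. That $\varphi$ is \emph{some} Euclidean norm is then automatic. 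The lower bound comes from proving $\varphi=\tau$: assuming $\tau(d)=\beta<\varphi(d)$ for an irreducible $d\in R_j$, one tests $d$ against the variable $n=x_{\{\beta\},j+1}$, which is fresh relative to $d$, and analyzes how special primes must occur in any purported remainder $un-q'd$ of $\varphi$-value below $\beta$; a leading-coefficient comparison in the relevant special variable forces $d\mid d_1$ for an earlier denominator $d_1$, contradicting $\varphi(d)>\beta$. The unbounded supply of fresh $x$-variables at every stage is exactly what pins down the minimal norm, engineered into a single uniform construction rather than threaded through a transfinite tower.
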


A second reason that Definition \ref{Definition:TED} has not gained greater traction is  that the absolute value function on $\Z$, while being both the motivation and prototypical example of a Euclidean norm, turns out to be somewhat atypical among such norms.  As we will show in Proposition \ref{Prop:WhenTED} and Lemma \ref{LenstraLemma}, every Euclidean domain has a unique minimal Euclidean norm that is ``logarithmically superadditive'' but not multiplicative (while the opposite holds for the absolute value).  In hindsight, the naturality of the superadditive condition on the minimal norm is highlighted when studying transfinitely valued norms.

It has been a long-standing open question (see, for instance, \cite{Lemmermeyer}) whether every finitely valued Euclidean domain has some multiplicative Euclidean norm, meaning a Euclidean norm $\varphi:R\setminus\{0\}\to \Z_{\geq 0}$ such that $\varphi(xy)=\varphi(x)\varphi(y)$ for each $x,y\in R\setminus\{0\}$.  We answer this question negatively.

\begin{thm}\label{Thm:Mult}
There is a finitely valued Euclidean domain with no multiplicative Euclidean norm.
\end{thm}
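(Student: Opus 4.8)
The plan is to prove Theorem~\ref{Thm:Mult} by exhibiting a finitely valued Euclidean domain whose arithmetic structure is incompatible with any multiplicative norm. The strategy rests on a key numerical obstruction: if $\varphi$ is a multiplicative Euclidean norm on $R$, then for every unit $u$ we have $\varphi(u)=1$ (since $\varphi(1)=\varphi(1)^2$ forces $\varphi(1)=1$, and $\varphi(u)\varphi(u^{-1})=1$ in $\Z_{\geq 0}$), and for every nonzero nonunit $x$ we have $\varphi(x)\geq 2$. Moreover, the smallest possible nonunit norm value places strong divisibility constraints: an element $p$ with $\varphi(p)$ prime must be irreducible, and an element of norm exactly $2$ behaves like a ``prime of least size.'' The idea is to build a ring in which the \emph{minimal} Euclidean norm (which exists and is logarithmically superadditive by Proposition~\ref{Prop:WhenTED} and Lemma~\ref{LenstraLemma}) forces a pattern of irreducibles and unit groups that no multiplicative function into $\Z_{\geq 0}$ can realize.

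\textbf{Construction.} First I would modify the main construction underlying Theorem~\ref{Thm:Main}, specialized to the case $\alpha=1$ so that the resulting domain is \emph{finitely valued} (Euclidean order type $\omega$), but arranged so that the multiplicative structure is rigid. Concretely, the goal is to produce a finitely valued Euclidean domain $R$ together with control over which elements are irreducible and how units act on them. The key design feature I would aim for is a configuration of nonassociate irreducibles and a unit group whose combinatorics cannot be matched by integer multiplicativity. For example, one wants an irreducible $\pi$ that would necessarily receive the minimal nonunit value ($\varphi(\pi)=2$ in any multiplicative norm), together with enough further irreducibles so that a counting or factorization argument yields a contradiction: multiplicativity would force two distinct prime-power norm values to coincide, or would assign a forbidden value to a unit.

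\textbf{The obstruction argument.} The heart of the proof is to assume toward a contradiction that a multiplicative Euclidean norm $\psi:R\setminus\{0\}\to\Z_{\geq 0}$ exists and derive a contradiction from the forced values. The main steps are: (1) record that $\psi(u)=1$ exactly for units $u$, so $\psi$ descends to a multiplicative function on the monoid $R^{\bullet}/R^{\times}$ of nonzero elements modulo associates; (2) identify, within the constructed ring, a finite set of irreducibles whose product relations (the defining relations of $R$) impose equations among their $\psi$-values; and (3) show that these equations have no solution in $\Z_{\geq 1}$ where units map to $1$ and nonunits map to values $\geq 2$. The cleanest version of step~(3) would exploit the fact that in a UFD-like quotient the prime factorizations are unique, so a multiplicative norm is determined by its values on irreducibles; if the ring contains an element admitting two genuinely different factorization-length behaviors relative to the norm (made possible by the transfinite/minimal-norm machinery even in the finitely valued case), multiplicativity collapses.

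\textbf{Main obstacle.} The hard part will be simultaneously achieving two competing demands: the ring must be \emph{finitely valued} (so that Theorem~\ref{Thm:Mult} is about the classical Euclidean domains), yet its multiplicative structure must be exotic enough to defeat every multiplicative norm. A norm's minimality (logarithmic superadditivity) and multiplicativity pull in opposite directions — the whole point of the paper's earlier discussion — so the delicate engineering is to ensure the specific irreducibles and unit relations survive in the \emph{finitely valued} quotient while still blocking multiplicativity. I expect the proof to proceed by first verifying that the modified ring is genuinely Euclidean (of order type $\omega$) using the same valuation-theoretic estimates as in Theorem~\ref{Thm:Main}, and then isolating a small, explicit witness configuration of irreducibles on which the nonexistence of a multiplicative norm can be checked by a short direct computation rather than a global structural theorem.
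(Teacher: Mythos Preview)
Your plan has a genuine gap: the obstruction mechanism you propose cannot work in the rings produced by the paper's construction. Those rings are localizations of polynomial rings over a field, hence UFDs. In a UFD the multiplicative monoid $R^{\bullet}/R^{\times}$ is \emph{free} on the irreducibles, so there are no nontrivial ``product relations'' or ``defining relations'' among irreducibles to exploit. Any assignment of integers $\geq 2$ to the (associate classes of) irreducibles extends uniquely to a multiplicative function on $R^{\bullet}/R^{\times}$; step~(3) of your outline therefore has no equations to violate. Likewise, ``two genuinely different factorization-length behaviors'' do not exist in a UFD. The obstruction must come from the \emph{Euclidean} condition, not from the bare multiplicative structure.

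The paper's actual argument is a growth-rate comparison against the minimal norm $\tau$, and this is the idea you are missing. One modifies the $\alpha=1$ construction by adjoining a single extra variable $z$ and redefining $\varphi$ so that $\varphi(z^{k})=k^{k}$ (while keeping $\varphi$ Hessenberg-additive on all other prime factorizations). The delicate work is re-verifying Lemmas~\ref{Lemma:RisEuclidean} and~\ref{Lemma:RhasMinEuclidNorm} for this modified $\varphi$, so that $\varphi=\tau$ is still the minimal Euclidean norm. Once that is done, the contradiction is immediate: if $\psi$ were a multiplicative Euclidean norm with $\psi(z)=k$, then for $\ell=k+1$ one has $\psi(z^{\ell})=k^{\ell}<\ell^{\ell}=\varphi(z^{\ell})=\tau(z^{\ell})$, contradicting the minimality $\tau\leq\psi$. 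So the witness is not a configuration of several irreducibles with relations, but a \emph{single} irreducible on whose powers the minimal norm is forced to grow super-exponentially, faster than any multiplicative norm can.
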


We should point out that some authors posit additional conditions on Euclidean norms, or may allow defining the norm on $0$. Usually it is straightforward to translate results from one definition of norm to another, and \cite{AF} is an excellent survey on these issues.  For a broad introduction to Euclidean domains we also recommend the paper \cite{Lemmermeyer} (an updated version of that paper is also available online).  For previous work on transfinitely valued norms we recommend \cite{Motzkin,Samuel,Lenstra,Clark}, ordered by chronology.

This paper is organized as follows.  In \S\ref{Section:Ord} we review some well-known facts about ordinals that are central to the construction.  In \S\ref{Section:MinEuclid} we state an alternative, cleaner definition of Euclidean order type, and prove that such types must be indecomposable ordinals.  Next, in \S\ref{Section:Construct} we explicitly construct a Euclidean domain with Euclidean order type an arbitrary indecomposable ordinal, and in \S\ref{Section:Mult} we modify the construction to give a finitely valued Euclidean domain with no multiplicative norm.  We end the paper by giving some further consequences of the construction in \S\ref{Section:End}.

\section{Review of ordinal arithmetic}\label{Section:Ord}

All of the material in this section follows easily from the theory of ordinals developed in any standard text, such as \cite{Jech}, and so we will quickly review those parts of the theory necessary for our construction.  It is well-known, see \cite[Theorem 2.26]{Jech}, that any ordinal can be written uniquely in \emph{Cantor normal form}
\[
\omega^{\alpha_1}n_1 +\omega^{\alpha_2}n_2+\cdots + \omega^{\alpha_k}n_k=\sum_{i=1}^k\omega^{\alpha_i}n_i
\]
where we have $\alpha_1>\alpha_2>\cdots>\alpha_k$ are ordinals, the coefficients $n_1,n_2,\ldots, n_k$ are positive integers, and $k\in \omega$.  Since ordinal addition is not commutative, summation will be written from left to right (as the summation index increases), as in the equality above.

An ordinal is (additively) \emph{indecomposable} if it is nonzero and cannot be written as a sum of two smaller ordinals.  These are precisely the ordinals of the form $\omega^{\alpha}$ for some $\alpha\in \Ord$, or in other words those ordinals that have precisely one term in their Cantor normal form (with coefficient $1$).

Given two ordinals $\gamma$ and $\delta$, after adding terms with zero coefficients to their Cantor normal forms (as necessary), we may assume that the same ordinal powers appear and write $\gamma = \sum_{i=1}^{k}\omega^{\alpha_i}m_i$ and $\delta = \sum_{i=1}^{k}\omega^{\alpha_i}n_i$, where $\alpha_1>\alpha_2>\cdots>\alpha_k$ are ordinals, each $m_i$ and $n_i$ is a non-negative integer, and $k\in \omega$.  The \emph{Hessenberg sum} of $\gamma$ and $\delta$ is the ordinal
\[
\gamma\oplus \delta = \sum_{i=1}^{k}\omega^{\alpha_i}(m_i+n_i).
\]
Unlike ordinary ordinal addition, Hessenberg summation is commutative and cancellative.  We will use these facts freely.

\section{Minimal Euclidean norms}\label{Section:MinEuclid}

The Euclidean algorithm is a method of passing from one denominator to another more ``simple'' denominator.  We can measure simplicity using the divisibility relation.  For a ring $R$, and two elements $x,y\in R$, as usual we say that $y$ divides $x$, written $y|x$, if there exists some $z\in R$ such that $x=yz$.  It is now an easy task to recursively enumerate different levels of complexity with respect to this relation, as follows.

For each $\alpha\in \Ord$ we let $S_{\alpha}(R)$ be the set
\begin{equation}\label{DefiningSimplerDenoms}
\{d\in R\, :\, \text{for each } n\in R,\text{ either } d|n \text{ or there exist }  q\in R \text{ and } \beta<\alpha \text{ such that } n-qd\in S_{\beta}(R)\}.
\end{equation}
Notice that $S_0(R)$ is the set of units in $R$, and thus $S_0(R)$ is comprised of the simplest denominators (i.e., those that divide every element of $R$).  One can quickly verify that there is a containment $S_{\beta}(R)\subseteq S_{\alpha}(R)$ whenever $\beta\leq \alpha$.  Moreover, if $S_{\beta}(R)=S_{\beta+1}(R)$, then $S_{\beta}(R)=S_{\alpha}(R)$ for all $\alpha\geq \beta$.  Thus, there is a smallest ordinal $\rho(R)$ at which this sequence of subsets stabilizes, and by cardinality considerations $\rho(R)$ is smaller than the successor cardinal of the cardinality of $R$.

The following proposition collects some basic facts about these sets, and their relationship to Euclidean norms, first proved in \cite{Motzkin} and \cite{Samuel} (but also see \cite{Lenstra} and \cite{Clark}, for slightly different proofs).

\begin{prop}\label{Prop:WhenTED}
The following are equivalent for any ring $R$:
\begin{itemize}
\item[{\rm (1)}] $S_{\alpha}(R)=R$ for some $\alpha\in \textup{\textsf{Ord}}$.
\item[{\rm (2)}] $R$ is a Euclidean domain.
\end{itemize}
If these conditions hold, then it also happens that:
\begin{itemize}
\item The function $\tau:R\setminus\{0\}\to \textup{\textsf{Ord}}$ given by the rule
\[
\tau(x)=\min\{\alpha\in \textup{\textsf{Ord}} : x\in S_{\alpha}(R)\}
\]
is a Euclidean norm on $R$.
\item $\tau$ is minimal, in the sense that if $\varphi$ is any other Euclidean norm on $R$, then $\tau(x)\leq \varphi(x)$ for each $x\in R\setminus\{0\}$.
\item $\tau(R\setminus\{0\})=\rho(R)$ is the Euclidean order type of $R$, and $0\in S_{\alpha}(R)$ if and only if $\alpha\geq \rho(R)$.
\end{itemize}
\end{prop}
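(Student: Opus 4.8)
The plan is to route the whole proposition through one transfinite induction comparing the values of an arbitrary Euclidean norm with the filtration $S_\bullet(R)$ from \eqref{DefiningSimplerDenoms}. The pivotal claim is that \emph{if $\varphi$ is any Euclidean norm on $R$, then $x\in S_{\varphi(x)}(R)$ for every $x\in R\setminus\{0\}$}. I would prove this by transfinite induction on $\gamma=\varphi(x)$. Fix $x$ with $\varphi(x)=\gamma$ and an arbitrary $n\in R$; if $x\mid n$ there is nothing to do, and otherwise the Euclidean property produces $q,r$ with $n=qx+r$ and $\varphi(r)<\gamma$, where $r=n-qx\neq 0$ precisely because $x\nmid n$. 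The induction hypothesis applied to $r$ gives $r\in S_{\varphi(r)}(R)$ with $\varphi(r)<\gamma$, so $n-qx$ lies in $S_\beta(R)$ for $\beta=\varphi(r)<\gamma$; as $n$ was arbitrary, the defining condition \eqref{DefiningSimplerDenoms} for $S_\gamma(R)$ holds and $x\in S_\gamma(R)$. The base case $\gamma=0$ is just the observation that $\varphi(x)=0$ leaves no room for a smaller remainder, so $x$ divides every element and $x\in S_0(R)$.

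This single claim carries most of the load. For (2)$\Rightarrow$(1): if $\varphi$ takes all its values below an ordinal $\lambda$, then every nonzero $x$ lies in $S_{\varphi(x)}(R)\subseteq S_\lambda(R)$, whence $0\in S_{\lambda+1}(R)$ and $S_{\lambda+1}(R)=R$. For (1)$\Rightarrow$(2) I would define $\tau$ as in the statement and check the Euclidean property directly: given $n$ and $d\neq 0$, set $\gamma=\tau(d)$ so that $d\in S_\gamma(R)$, and apply \eqref{DefiningSimplerDenoms} to $n$ to get either $d\mid n$, or $q$ and $\beta<\gamma$ with $r=n-qd\in S_\beta(R)$; in the latter case $r\neq 0$ and $\tau(r)\leq\beta<\gamma=\tau(d)$. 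Minimality of $\tau$ is then immediate from the pivotal claim, since $\tau(x)=\min\{\alpha:x\in S_\alpha(R)\}\leq\varphi(x)$ for any competing norm $\varphi$. (The integral-domain clause in Definition~\ref{Definition:TED} is a standing hypothesis on $R$; the arguments here establish only the existence and minimality of the norm.)

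For the final bullet I would reconcile the several descriptions of $\rho(R)$. Re-reading \eqref{DefiningSimplerDenoms}, $0\in S_\alpha(R)$ holds exactly when every nonzero $n$ has $\tau(n)<\alpha$, that is, exactly when $\alpha$ strictly bounds the image $\tau(R\setminus\{0\})$; let $\sigma$ be the least such $\alpha$. Then $S_\sigma(R)=R$ while $S_\alpha(R)\neq R$ for $\alpha<\sigma$ (as $0\notin S_\alpha(R)$ there), so $\sigma$ is at once the least ordinal with $S_\sigma(R)=R$ and the least ordinal containing $\tau(R\setminus\{0\})$. Since $S_\sigma(R)=S_{\sigma+1}(R)$ the sequence has stabilized by $\sigma$, giving $\rho(R)\leq\sigma$; and since the stable value $S_{\rho(R)}(R)$ equals $R$ by hypothesis (1), minimality of $\sigma$ forces $\sigma\leq\rho(R)$, so $\sigma=\rho(R)$. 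Finally minimality of $\tau$ promotes ``least ordinal bounding $\tau(R\setminus\{0\})$'' to ``least ordinal bounding $\varphi(R\setminus\{0\})$ over all Euclidean norms $\varphi$,'' which is the Euclidean order type; this yields $\rho(R)$ as the Euclidean order type together with the stated characterization $0\in S_\alpha(R)\iff\alpha\geq\rho(R)$.

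I expect the transfinite induction to be routine once the non-vanishing of the remainder is noted, so the genuinely delicate step is this last reconciliation: one must track $S_\bullet(R)$ through limit stages and confirm that $0$ is the final element to enter the filtration, so that the stabilization ordinal $\rho(R)$, the least $\alpha$ with $S_\alpha(R)=R$, and the least ordinal bounding the image of $\tau$ genuinely coincide.
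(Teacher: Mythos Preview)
The paper does not supply its own proof of Proposition~\ref{Prop:WhenTED}; it simply attributes the result to Motzkin and Samuel (with variants in Lenstra and Clark). Your argument is precisely the standard one from that literature: the pivotal transfinite induction showing $x\in S_{\varphi(x)}(R)$ for any Euclidean norm $\varphi$ is Motzkin's original observation, and your deductions of (1)$\Leftrightarrow$(2), the Euclidean property of $\tau$, and its minimality are all correct and routine.

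There is one genuine omission in the final bullet. The proposition asserts $\tau(R\setminus\{0\})=\rho(R)$ as an equality of sets, i.e., that the image of $\tau$ is an \emph{initial segment} of the ordinals. You establish only that $\rho(R)$ is the least ordinal \emph{containing} the image, which is weaker. To finish, suppose some $\beta<\rho(R)$ were not a value of $\tau$; then for any $d\in S_{\beta+1}(R)$ and any $n$ with $d\nmid n$, the witness $r=n-qd\in S_\gamma(R)$ with $\gamma\leq\beta$ has $\tau(r)\leq\beta$, hence $\tau(r)<\beta$ (since $\beta$ is not attained), so already $r\in S_{\tau(r)}(R)$ with $\tau(r)<\beta$. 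This shows $S_{\beta}(R)=S_{\beta+1}(R)$, forcing $\rho(R)\leq\beta$, a contradiction. With this extra sentence your reconciliation of the stabilization ordinal, the image of $\tau$, and the Euclidean order type is complete.
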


This last property motivates some authors to define $\varphi(0):=\varphi(R\setminus\{0\})$, for any Euclidean norm $\varphi$.  Other authors put $\varphi(0)=0$, and again other authors set $\varphi(0)=-\infty$.  We will follow the convention that $\varphi(0)$ is undefined, and similarly let the degree of the zero polynomial remain undefined.

It is well-known that the minimal Euclidean norm $\tau_{\Z}$ on $\Z$ is given by the rule $\tau_{\Z}(x)= \lfloor\log_2|x|\rfloor$.  The Euclidean algorithm corresponding to this norm is called the ``least absolute value remainder'' algorithm, as it produces remainders with minimized absolute value.  Since $\log(xy)=\log(x)+\log(y)$, we have $\tau_{\Z}(xy)\geq \tau_{\Z}(x)+\tau_{\Z}(y)$, for any $x,y\in \Z\setminus\{0\}$.  Surprisingly, \emph{all} minimal Euclidean norms satisfy a similar property.  After discovering this fact we subsequently chanced upon Proposition 3.4 in Lenstra's lecture notes \cite{Lenstra}, which established the result forty years earlier (by essentially the same proof, which we therefore do not include).  This chance encounter was quite fortuitous, as the result does not seem to have been noted elsewhere in the literature.

\begin{lemma}\label{LenstraLemma}
Let $R$ be a Euclidean domain with minimal Euclidean norm $\tau$.  If $x,y\in R\setminus\{0\}$, then
\[
\tau(xy)\geq \tau(x)\oplus \tau(y).
\]
\end{lemma}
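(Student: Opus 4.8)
The plan is to prove the inequality $\tau(xy) \geq \tau(x) \oplus \tau(y)$ by a double transfinite induction on the values $\tau(x)$ and $\tau(y)$, exploiting directly the recursive structure of the sets $S_\alpha(R)$ given in \eqref{DefiningSimplerDenoms}. The key observation is that the Hessenberg sum is the right operation precisely because it is the "carrying-free" addition that interacts well with a simultaneous descent in both coordinates: when we decrease one of the norms by passing to a smaller $S_\beta$, the Hessenberg sum decreases correspondingly, and the strict monotonicity of $\oplus$ in each argument is what makes the induction close.

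First I would set up the statement to be proved inductively. Fix $x, y \in R \setminus \{0\}$ and suppose, as the induction hypothesis, that for all nonzero $x', y'$ with $\tau(x') \oplus \tau(y') < \tau(x) \oplus \tau(y)$ we already know $\tau(x'y') \geq \tau(x') \oplus \tau(y')$. Write $\gamma = \tau(x)$ and $\delta = \tau(y)$, so $xy \in S_{\gamma \oplus \delta}(R)$ is the goal (equivalently $\tau(xy) \geq \gamma \oplus \delta$). To prove $xy \in S_{\gamma \oplus \delta}$, I would take an arbitrary $n \in R$ with $xy \nmid n$ and produce some $q \in R$ and some $\beta < \gamma \oplus \delta$ with $n - q(xy) \in S_\beta(R)$, matching the defining condition \eqref{DefiningSimplerDenoms}.

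The heart of the argument is to run the divisibility condition for the two factors in sequence. Since $x \in S_\gamma$, applying \eqref{DefiningSimplerDenoms} to the element $n$ and denominator $x$ yields (if $x \nmid n$) some $q_1$ and $\beta_1 < \gamma$ with $n - q_1 x \in S_{\beta_1}$; and since $y \in S_\delta$, a parallel step peels off a factor of $y$. The bookkeeping is to combine these two descents so that the resulting remainder lands in $S_\beta$ with $\beta \leq \beta_1 \oplus \delta$ or $\gamma \oplus \beta_2$, each of which is strictly below $\gamma \oplus \delta$ by the strict monotonicity of $\oplus$. The cases where $x \mid n$ or $y \mid n$ must be handled separately: there one divides out the factor that divides $n$, reduces to a product with strictly smaller Hessenberg sum, and invokes the induction hypothesis. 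Reassembling a valid quotient $q$ for the product $xy$ from the quotients obtained for $x$ and $y$ individually requires care, since one must verify that the combined remainder is genuinely of the form $n - q(xy)$.

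I expect the main obstacle to be exactly this reassembly step: translating "$x$ reduces $n$ to low complexity" and "$y$ reduces the result to low complexity" into a single statement about the \emph{product} $xy$ as a denominator, while keeping the complexity bound strictly below $\gamma \oplus \delta$. The strict monotonicity and cancellativity of $\oplus$ (recorded in \S\ref{Section:Ord}) are the tools that guarantee the bound is strict rather than merely weak; getting the quotient algebra to cooperate—so that the peeled-off remainders actually correspond to a remainder modulo $xy$—is the delicate part. As the authors note, this is essentially Lenstra's argument, so I would keep the induction skeleton clean and let the monotonicity of the Hessenberg sum do the combinatorial work.
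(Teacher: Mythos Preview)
There is a fundamental error in your setup: you write that ``$xy \in S_{\gamma \oplus \delta}(R)$ is the goal (equivalently $\tau(xy) \geq \gamma \oplus \delta$),'' but this equivalence is backwards. By definition $\tau(xy) = \min\{\alpha : xy \in S_\alpha(R)\}$, so $xy \in S_{\gamma \oplus \delta}(R)$ is equivalent to $\tau(xy) \leq \gamma \oplus \delta$, not $\geq$. Your entire strategy---producing, for every numerator $n$, a remainder $n - q(xy)$ of small $\tau$-value---is precisely how one shows that $\tau(xy)$ is \emph{small}. The lemma asserts that $\tau(xy)$ is \emph{large}, and indeed the reverse inequality $\tau(xy) \leq \tau(x) \oplus \tau(y)$ is false in general: already in $\Z$ one has $\tau_{\Z}(3)=1$ while $\tau_{\Z}(9)=3>2=\tau_{\Z}(3)\oplus\tau_{\Z}(3)$.

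The correct argument (which the paper omits, deferring to Lenstra) runs the induction on $\epsilon = \tau(xy)$ rather than on $\gamma\oplus\delta$, and exploits witnesses to the \emph{failure} of membership in lower $S$-sets. For any $a$ with $x \nmid a$ one has $xy \nmid ay$, so since $xy \in S_\epsilon$ there is some $q$ with $\tau\bigl((a-qx)y\bigr) < \epsilon$; the inductive hypothesis then gives $\tau(a-qx) \oplus \delta < \epsilon$. Because $\tau(x) = \gamma$, for each $\gamma' < \gamma$ there is an $a$ with $x\nmid a$ and $\tau(a - qx) \geq \gamma'$ for every $q$; combining, $\gamma' \oplus \delta < \epsilon$ for all $\gamma' < \gamma$. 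Symmetrically $\gamma \oplus \delta' < \epsilon$ for all $\delta' < \delta$, and the recursive characterization of the Hessenberg sum as the least ordinal strictly above every $\gamma' \oplus \delta$ and every $\gamma \oplus \delta'$ forces $\gamma \oplus \delta \leq \epsilon$. The key move is thus not to \emph{construct} small remainders for $xy$, but to feed numerators of the form $ay$ (and $xb$) into the division-by-$xy$ condition and read off lower bounds.
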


This lemma has some strong consequences.

\begin{cor}\label{Cor:TEDindecomp}
If $R$ is a Euclidean domain, then its Euclidean order type is an indecomposable ordinal.
\end{cor}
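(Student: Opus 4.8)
The plan is to show that the Euclidean order type $\lambda := \rho(R)$ is closed under Hessenberg addition, and then to observe that the only $\oplus$-closed ordinals are the indecomposable ones. First I would invoke Proposition \ref{Prop:WhenTED} in the precise form it is stated there: the minimal norm $\tau$ satisfies $\tau(R\setminus\{0\}) = \rho(R) = \lambda$, so the image of $\tau$ is exactly the set of ordinals strictly below $\lambda$. In particular, every ordinal $\beta < \lambda$ is realized as $\tau(x)$ for some nonzero $x\in R$. Thus $\lambda \ge 1$, and it suffices to rule out decomposability.

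The key step is to prove that $\beta \oplus \gamma < \lambda$ whenever $\beta,\gamma < \lambda$. Given such $\beta$ and $\gamma$, choose $x,y \in R\setminus\{0\}$ with $\tau(x)=\beta$ and $\tau(y)=\gamma$, which is possible by the previous paragraph. Because $R$ is a domain, $xy\neq 0$, so $\tau(xy)$ is defined and, lying in the image of $\tau$, satisfies $\tau(xy) < \lambda$. Lemma \ref{LenstraLemma} then gives
\[
\beta \oplus \gamma = \tau(x)\oplus\tau(y) \le \tau(xy) < \lambda ,
\]
as desired. I want to emphasize that the domain hypothesis is doing real work here: it is exactly what guarantees $xy\neq 0$, and hence that $\tau(xy)$ is a genuine norm value lying below $\lambda$.

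It remains to check that an $\oplus$-closed ordinal $\lambda$ must be indecomposable, which I would argue contrapositively using Cantor normal form. If $\lambda = \sum_{i=1}^k \omega^{\alpha_i} n_i$ is decomposable, then it is not the case that $k=1$ and $n_1=1$. Setting $\beta := \omega^{\alpha_1}$ and $\gamma := \omega^{\alpha_1}(n_1-1) + \sum_{i=2}^k \omega^{\alpha_i} n_i$, one checks directly that $\beta < \lambda$ and $\gamma < \lambda$ (either the coefficient of $\omega^{\alpha_1}$ strictly drops, or $\gamma$ has strictly smaller leading exponent), while the definition of Hessenberg sum yields $\beta \oplus \gamma = \lambda$. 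This contradicts the $\oplus$-closure established above, so $\lambda$ must have the form $\omega^{\alpha}$, i.e., it is indecomposable. The main obstacle is conceptual rather than computational: the point is to recognize that Lemma \ref{LenstraLemma} together with surjectivity of $\tau$ onto $\lambda$ is precisely what forces $\oplus$-closure, after which the ordinal arithmetic reducing $\oplus$-closure to indecomposability is routine.
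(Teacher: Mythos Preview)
Your proof is correct and takes essentially the same approach as the paper's: both combine the surjectivity of $\tau$ onto $\rho(R)$ from Proposition~\ref{Prop:WhenTED} with Lemma~\ref{LenstraLemma} and a Cantor normal form argument. The only difference is organizational---the paper picks $x$ with $\tau(x)=\omega^{\alpha_1}$ and directly computes $\tau(x^{n_1+1})\ge\omega^{\alpha_1}(n_1+1)>\rho(R)$, whereas you factor through the intermediate observation that $\rho(R)$ is $\oplus$-closed.
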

\begin{proof}
Let $\tau$ be the minimal Euclidean norm for $R$, and write the Euclidean order type of $R$ in Cantor normal form as $\rho(R)=\omega^{\alpha_1}n_1+\omega^{\alpha_2}n_2+\cdots + \omega^{\alpha_k}n_k$.  If we suppose, by way of contradiction, that $\omega^{\alpha_1}<\rho(R)$, there then exists some element $x\in R\setminus\{0\}$ with $\tau(x)=\omega^{\alpha_1}$.  By Lemma \ref{LenstraLemma},
\[\tau(x^{n_1+1})\geq \bigoplus_{i=1}^{n_1+1}\tau(x)=\omega^{\alpha_1}(n_1+1)>\rho(R),
\]
giving us the needed contradiction.
\end{proof}

In \S\ref{Section:End} we will give some further consequences of Lenstra's result.

\section{Construction}\label{Section:Construct}

In this section we will finish the proof of Theorem \ref{Thm:Main}, by constructing a domain whose Euclidean order type is $\omega^{\alpha}$, for an arbitrary $\alpha\in \Ord$ that we now fix.  The idea of the construction is that by recursively adjoining new elements, which act as quotients, we can force the Euclidean condition, without changing the norms of old elements.

To begin, let $F$ be a field, and put $R_0=F[x_{\{\beta\},0} : 0<\beta<\omega^{\alpha}]$ where the elements $\{x_{\{\beta\},0}\}_{0<\beta<\omega^{\alpha}}$ are independent indeterminates over the field $F$.  For any element $r\in R_0\setminus F$, we define
\begin{equation}\label{Eq:SubDefinition}
{\rm Sub}(r)=\{\beta\in \Ord : \beta \text{ is an element of the first index of some variable in the support of }r\}.
\end{equation}
For instance, if $r=x_{\{5\},0}x_{\{1\},0}-x_{\{3\},0}^2$, then ${\rm Sub}(r)=\{1,3,5\}$.  (Currently all of our variables have only one ordinal in the first index, but this will change shortly.)  We next define a function $\varphi:R_0\setminus\{0\}\to\Ord$ by setting
\begin{equation}\label{Eq:VarphiDefinition}
\varphi(r)=\bigoplus_{i=1}^{n}\varphi(p_i),\text{ where }r=u\prod_{i=1}^{n}p_i\text{ is a prime factorization of $r$ with $u\in F\setminus\{0\}$},
\end{equation}
and take $\varphi(p)=\max ({\rm Sub}(p))$ for any prime $p\in R_0$.  (We also take $\varphi(u)=0$ for any $u\in F\setminus\{0\}$.)  For instance, the element $r=x_{\{5\},0}x_{\{1\},0}-x_{\{3\},0}^2$ is already prime, and thus $\varphi(r)=5$ in this case.   Note that $R_0$ is a polynomial ring over a field, hence a U.F.D., and so it makes sense to speak of prime factorizations.  The function $\varphi$ will eventually become our Euclidean norm.

Next, let
\[
S_0=\{(n,d)\in R_0\times R_0 : \gcd(n,d)=1\text{  and }\varphi(n)\geq \varphi(d)\geq 1\}.
\]
These are the pairs for which we will adjoin a new quotient $q$ such that the remainder $n-qd$ has smaller norm.  Thus, we pass to the larger ring
\[
R_1=R_0[x_{\{\beta\},1},\, y_{T,1,n,d} : 0<\beta<\omega^{\alpha},\, (n,d)\in S_0]
\]
where $T=T(n,d):={\rm Sub}(n)\cup{\rm Sub}(d)\cup\{0\}$.  For instance, if $n=x_{\{5\},0}x_{\{7\},0}^2$ and $d=x_{\{3\},0}^5-x_{\{4\},0}$ then $y_{T,1,n,d}=y_{\{0,3,4,5,7\},1,n,d}$.

We want the new variable $q:=y_{T,1,n,d}$ to act as a quotient, for the numerator $n$ and the denominator $d$.  Thus, we say that any unit multiple of the polynomial $n-qd$ is a \emph{special prime}, with corresponding \emph{special variable} $q$.   Note that the polynomial $n-qd$ is prime (as an element of $R_1$), since it has degree $1$ in the variable $q$ (and the coefficients $n$ and $d$ are relatively prime because $(n,d)\in S_0$).

Define ${\rm Sub}(r)$ for any $r\in R_1\setminus F$ exactly as in (\ref{Eq:SubDefinition}).  (Notice that each of the $y$ variables has multiple ordinals in its first subscript, which is why we phrased (\ref{Eq:SubDefinition}) as we did.)  Subsequently, define $\varphi$ on products of primes in $R_1$, using (\ref{Eq:VarphiDefinition}), and define $\varphi$ on irreducible elements $p\in R_1$ by the rule
\begin{equation}\label{Eq:VarphiPrimeSpecial}
\varphi(p)=\begin{cases}
\max\{\beta\in T : \beta<\varphi(d)\} & \text{ if }p \text{ is a special prime, with special variable $y_T$},\\
\max({\rm Sub}(p)) & \text{ otherwise}.
\end{cases}
\end{equation}
(When $p\in R_0$ this definition agrees with the one given originally, so we really have extended $\varphi$.)

We now recursively define rings $R_j$ for each $j<\omega$, and extend $\varphi$ to these rings.  The details are not much different in this general case than when passing from $R_0$ to $R_1$.  Suppose that we have defined $R_i$ (for some $i\geq 1$) and have extended $\varphi$ to this ring so that both (\ref{Eq:VarphiDefinition}) and (\ref{Eq:VarphiPrimeSpecial}) hold.  To define $R_{i+1}$, we let
\[
S_i=\{(n,d)\in R_i\times R_i : \gcd(n,d)=1\text{ and }\varphi(n)\geq \varphi(d)\geq 1\}
\]
and then we take
\[
R_{i+1}=R_i[x_{\{\beta\},i+1},\, y_{T,i+1,n,d} : 0<\beta<\omega^{\alpha},\, (n,d)\in S_{i}]
\]
where $T$ is defined (solely in terms of $n$ and $d$) as before.  Each of the variables $q=y_{T,i+1,n,d}$ is a special variable for exactly one new special prime $n-qd$ (up to unit multiples).  We extend the function ${\rm Sub}:R_{i+1}\setminus F\to \Ord$ using (\ref{Eq:SubDefinition}), and extend $\varphi$ to $R_{i+1}$ by (\ref{Eq:VarphiDefinition}) and (\ref{Eq:VarphiPrimeSpecial}).  This completes the recursive construction.  It should be noted that (\ref{Eq:VarphiPrimeSpecial}) defines $\varphi$ in terms of its earlier values, which causes no problems because if $n-qd\in R_{j+1}$ is a special prime (with special variable $q$), then $d\in R_j$, so the recursion stops in finite time.

Now, put $R_{\infty}=\bigcup_{j=0}^{\infty}R_j$ and let $U=\{r\in R_{\infty} : \varphi(r)=0\}$.  In other words, $U$ is the set of elements that are products (including the empty product) of special primes with $\varphi$-value $0$ and nonzero elements of $F$.  We put $R=U^{-1}R_{\infty}$, and we extend $\varphi$ to $R\setminus\{0\}$ in the obvious way.  (Namely, given $r\in R_{\infty}$ and $u\in U$ we let $\varphi(u^{-1}r):=\varphi(r)$.)  Now that we have defined the domain $R$, which will serve as our example, we next prove it has the necessary properties.

\begin{lemma}\label{Lemma:RisEuclidean}
With $R$ and $\varphi$ as defined above, the map $\varphi$ is a Euclidean norm on $R$.
\end{lemma}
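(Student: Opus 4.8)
The plan is to verify the two defining clauses of a Euclidean norm for the pair $(\varphi, R)$: that $\varphi$ is a well-defined function to the ordinals, and that for every $n, d \in R$ with $d \neq 0$, either $d \mid n$ or there exist $q, r \in R$ with $n = qd + r$ and $\varphi(r) < \varphi(d)$. Since every element of $R$ is a unit multiple (by an element of $U^{-1}$) of an element of $R_\infty$, and $\varphi$ is defined on $R$ by pulling back from $R_\infty$ while being insensitive to multiplication by units of $U$, I expect the division step to reduce to the case $n, d \in R_\infty$. Moreover, both $n$ and $d$ live in some common $R_i$, so I can work inside a fixed $R_i$ and produce the quotient in $R_{i+1}$.

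First I would dispose of the trivial reductions. If $\varphi(d) = 0$, then $d$ is a unit in $R$ (a product of $F^\times$ and special primes of $\varphi$-value zero, all inverted in $U$), so $d \mid n$ and there is nothing to prove. If $\varphi(n) < \varphi(d)$, then taking $q = 0$ and $r = n$ already works. We may therefore assume $\varphi(n) \geq \varphi(d) \geq 1$. Next, by factoring out the greatest common divisor $g = \gcd(n, d)$ in the UFD $R_\infty$ and writing $n = g n'$, $d = g d'$ with $\gcd(n', d') = 1$, I would reduce to the relatively prime case: a valid division $n' = q d' + r'$ yields $n = q d + g r'$, and I must check $\varphi(g r') < \varphi(d)$. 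Here Lemma \ref{LenstraLemma}'s superadditivity philosophy is reversed — I need an \emph{upper} bound, so I would instead use directly that $\varphi$ is defined as a Hessenberg sum over prime factors, giving $\varphi(g r') = \varphi(g) \oplus \varphi(r')$ and $\varphi(d) = \varphi(g) \oplus \varphi(d')$, so cancellativity of $\oplus$ reduces the required inequality $\varphi(g r') < \varphi(d)$ to $\varphi(r') < \varphi(d')$.

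This places me in the key case $(n, d) \in S_i$ for some $i$, and here the construction hands me exactly what I need: the special variable $q = y_{T, i+1, n, d}$ was adjoined precisely so that $r := n - qd$ is a special prime in $R_{i+1}$. Its norm is $\varphi(r) = \max\{\beta \in T : \beta < \varphi(d)\}$ by (\ref{Eq:VarphiPrimeSpecial}), and this is strictly less than $\varphi(d)$ by definition of the maximum. I would just need to confirm that the set $\{\beta \in T : \beta < \varphi(d)\}$ is nonempty so that the maximum exists — which holds because $0 \in T$ and $\varphi(d) \geq 1$. Thus $n = qd + r$ with $\varphi(r) < \varphi(d)$, as desired.

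The main obstacle, and the step I would treat most carefully, is verifying that $\varphi$ is genuinely well-defined and consistent across the whole construction — in particular that the recursive clause (\ref{Eq:VarphiPrimeSpecial}) does not conflict with (\ref{Eq:VarphiDefinition}) when an element admits a prime factorization involving special primes, and that passing to $R = U^{-1}R_\infty$ does not alter the norm of any element of $R_\infty$ nor introduce division ambiguities. The delicate point is that $\varphi$ on a product is the Hessenberg sum of $\varphi$-values of its prime factors, so I must confirm that factorization into primes in each $R_i$ (and the classification of which primes are special) is stable under the ring extensions, so that $\varphi(r)$ is independent of where the factorization is computed. The earlier remark that the recursion ``stops in finite time'' because $d \in R_j$ whenever $n - qd \in R_{j+1}$ is the crucial well-foundedness fact that makes (\ref{Eq:VarphiPrimeSpecial}) a legitimate definition, and I would lean on it to justify that $\varphi$ is total and single-valued on all of $R \setminus \{0\}$.
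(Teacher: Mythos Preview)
Your proposal is correct and follows essentially the same route as the paper: reduce to $n,d\in R_\infty$, dispose of the cases $\varphi(d)=0$ and $\varphi(n)<\varphi(d)$, factor out $g=\gcd(n,d)$, and then invoke the special variable $y_{T,i+1,n',d'}$ for the coprime pair, using the Hessenberg-sum identity $\varphi(gr')=\varphi(g)\oplus\varphi(r')$ together with cancellativity to transfer the inequality back. The one step you skate over is checking that the reduced pair actually lands in $S_i$, i.e.\ that $\varphi(n')\geq\varphi(d')\geq 1$; the paper makes this explicit (it is immediate from the same cancellativity argument applied to $\varphi(n)\geq\varphi(d)$, together with $d\nmid n$ forcing $d'$ to be a nonunit in $R$).
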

\begin{proof}
Let $n,d\in R$ with $d\neq 0$, and assume $d\nmid n$.  Our goal is to find some $q\in R$ with $\varphi(n-qd)<\varphi(d)$.  Since $\varphi$ is invariant under multiplication by units, we may as well assume $n,d\in R_{\infty}$ and neither has a special prime factor from $U$.  If $\varphi(n)<\varphi(d)$, we can take $q=0$, so we reduce to the case $\varphi(n)\geq \varphi(d)$.  Letting $r=\gcd(n,d)$ we may write $n=n'r$ and $d=d'r$ for some $n',d'\in R_{\infty}$ with $\gcd(n',d')=1$.  Note that $d'\neq 1$ since $d\nmid n$, and thus we must have $\varphi(d')\geq 1$.  Moreover by (\ref{Eq:VarphiDefinition}) we have,
\begin{equation}\label{Eq:FirstLemma1}
\varphi(n')\geq \varphi(d').
\end{equation}
Taking $q=y_{T,i,n',d'}$ (where $i$ is chosen large enough so that $n',d'\in R_{i-1}$) then $\varphi(n'-qd')<\varphi(d')$ by the first case of (\ref{Eq:VarphiPrimeSpecial}).  Thus, by (\ref{Eq:VarphiDefinition}) we find
\begin{equation}\label{Eq:FirstLemma2}
\varphi(n-qd)=\varphi(n'-qd')\oplus \varphi(r)<\varphi(d')\oplus\varphi(r)=\varphi(d),
\end{equation}
which verifies the claim.
\end{proof}

The map $\varphi$ does more than serve as a Euclidean norm.

\begin{lemma}\label{Lemma:RhasMinEuclidNorm}
The map $\varphi$ defined above equals the minimal Euclidean norm $\tau$ on $R$.
\end{lemma}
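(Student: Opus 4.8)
The plan is to prove $\varphi = \tau$ by establishing both inequalities. Since Lemma \ref{Lemma:RisEuclidean} shows $\varphi$ is a Euclidean norm, the minimality clause of Proposition \ref{Prop:WhenTED} immediately gives $\tau(x) \leq \varphi(x)$ for every $x \in R \setminus \{0\}$. So the entire content lies in the reverse inequality $\varphi(x) \leq \tau(x)$. Recalling that $\tau(x) = \min\{\alpha : x \in S_\alpha(R)\}$, it suffices to show that $\varphi(x) \leq \alpha$ whenever $x \in S_\alpha(R)$; equivalently, I would prove by transfinite induction on $\alpha$ the statement that every $x \in S_\alpha(R)$ satisfies $\varphi(x) \leq \alpha$. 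The base case $\alpha = 0$ is the observation that $S_0(R)$ consists of units, and by construction the units of $R$ are exactly the elements of $\varphi$-value $0$ (products of special primes of norm $0$ and nonzero field elements, now inverted), so $\varphi = 0$ on them.

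For the inductive step, I would assume the claim holds for all $\beta < \alpha$ and take $x \in S_\alpha(R)$ with, say, $\varphi(x) = \gamma$; the goal is $\gamma \leq \alpha$. The natural strategy is contrapositive in spirit: suppose $\gamma > \beta$ for some candidate $\beta < \alpha$, and exhibit a numerator $n$ witnessing that $x$ fails the defining membership condition \eqref{DefiningSimplerDenoms} of $S_\alpha(R)$ at level $\beta$ --- that is, find $n \in R$ such that $x \nmid n$ and yet for \emph{every} $q \in R$ the remainder $n - qx$ lies outside $S_\beta(R)$, meaning (by the inductive hypothesis applied contrapositively) that $\varphi(n - qx) > \beta$ is forced. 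The cleanest choice of test numerator should be built to mirror the construction itself: since $\varphi(x) = \gamma$ records the largest ordinal subscript $\beta' < \varphi(\text{denominator})$ available when $x$ was adjoined as a quotient, one selects $n$ so that the only way to reduce its norm below that of $x$ would require a quotient whose remainder realizes exactly the value $\gamma$, not something smaller. Concretely, I expect to use an indeterminate $x_{\{\gamma\},j}$ (or a suitable numerator forcing $\varphi$-value $\gamma$ on every possible remainder) as the obstruction, tracking how \eqref{Eq:VarphiDefinition} and \eqref{Eq:VarphiPrimeSpecial} propagate norms under the division $n = qx + r$.

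The main obstacle, and the step demanding the most care, is controlling $\varphi(n - qx)$ \emph{uniformly over all quotients} $q \in R$. A single well-chosen $n$ must defeat every possible $q$ simultaneously, and because $R$ is a localization of an infinite ascending union $R_\infty = \bigcup_j R_j$, the quotient $q$ may live arbitrarily high in the tower and may be an arbitrary rational expression, not a polynomial. I would handle this by reducing to the case $q \in R_\infty$ (clearing the unit denominators, which do not affect $\varphi$), then analyzing the prime factorization of $n - qx$ via \eqref{Eq:VarphiDefinition}: the key lemma to extract is that for the chosen $n$, no special prime of small norm can divide $n - qx$, because the special primes were adjoined only for coprime pairs $(n', d')$ with prescribed subscript sets $T$, so the support and the $\mathrm{Sub}$-data of $n - qx$ cannot drop below $\gamma$ no matter how $q$ is selected. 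Establishing this support-tracking estimate --- essentially showing that the construction never inadvertently creates a low-norm quotient for the test numerator --- is where the combinatorial heart of the argument lies, and it is precisely what pins $\varphi(x) \leq \tau(x)$ and hence the equality $\varphi = \tau$.
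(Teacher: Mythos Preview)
Your overall strategy---transfinite induction on $\tau$, a fresh indeterminate as test numerator, and an analysis of the special-prime factors of the remainder---is exactly the paper's approach. But two concrete ingredients are missing, and one of them makes your sketch fail as written.

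First, your proposed test numerator $x_{\{\gamma\},j}$ is indexed by $\gamma=\varphi(x)$, whereas the argument requires the index $\alpha=\tau(x)$. Here is why this matters. After clearing denominators one has $un-q'x=r$ with $\varphi(r)<\alpha$; the combinatorial core is to locate a special variable $q_1=y_{T_1,k_1,n_1,d_1}$ appearing in $r$ (or $u$) with the test index in $T_1$, and then to show by a leading-coefficient comparison in the variable $q_1$ that $x\mid d_1$. This yields only $\varphi(d_1)\geq\varphi(x)=\gamma$, a \emph{non-strict} inequality. The special-prime formula \eqref{Eq:VarphiPrimeSpecial} gives $\varphi(n_1-q_1d_1)=\max\{\delta\in T_1:\delta<\varphi(d_1)\}$, and for this maximum to dominate your index you need the index to be \emph{strictly} below $\varphi(d_1)$. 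With index $\gamma$ you cannot rule out $\varphi(d_1)=\gamma$, so you cannot force $\varphi(r)\geq\gamma$. With index $\alpha$, the contradiction hypothesis $\alpha=\tau(x)<\varphi(x)\leq\varphi(d_1)$ supplies the strict inequality and yields $\varphi(r)\geq\alpha$, contradicting $\varphi(r)<\alpha$.

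Second, you never reduce to the case where $x$ is irreducible. Without this, the divisibility step ``$x\mid d_1^m s$ and $\gcd(x,r)=1$ imply $x\mid d_1$'' breaks down. The paper handles this reduction via Lenstra's superadditivity (Lemma~\ref{LenstraLemma}) together with \eqref{Eq:VarphiDefinition}: if $\tau(x)<\varphi(x)$ then, writing $x$ as a product of primes, some prime factor $p$ already satisfies $\tau(p)<\varphi(p)$, so one may replace $x$ by $p$. This is not a cosmetic simplification; it is what makes the leading-coefficient argument go through.
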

\begin{proof}
We will show $\varphi(d)=\tau(d)$ for each $d\in R\setminus\{0\}$, by induction on $\tau(d)$.  First note that $\varphi(d)=0$ if and only if $d$ is a unit, if and only if $\tau(d)=0$.  This covers the base case.

Assume inductively that any element with $\tau$-norm smaller than $\beta\geq 1$ has the same $\varphi$-norm.  Further assume, by way of contradiction, that $\beta=\tau(d)<\varphi(d)$ for some nonzero $d\in R$.  By Lemma \ref{LenstraLemma} and (\ref{Eq:VarphiDefinition}), and from the fact that $\tau(r)\leq \varphi(r)$ for all $r\in R\setminus\{0\}$, we may reduce to the case that $d\in R_{\infty}$ is irreducible.  In particular, we have $d\in R_j$ for some $j<\omega$.
Setting $n:=x_{\{\beta\},j+1}$, then as $d\nmid n$, this means there exists some $q\in R$ such that $\tau(n-qd)<\beta$.  From our inductive assumption we then have $\varphi(n-qd)=\tau(n-qd)$.  After clearing denominators, we obtain
\begin{equation}\label{Eq:ContradictionEquality}
u n-q'd = r
\end{equation}
for some $u,q',r\in R_{\infty}$, with $u\in U$ and $\varphi(r)<\beta$.  If $u$ and $q'$ share a factor, then we can remove that factor from both sides of (\ref{Eq:ContradictionEquality}); so we may assume $u$ and $q'$ share no factor in $R_{\infty}$.  Also recall that $d$ is irreducible with positive $\varphi$-norm, so it shares no factor with $u$.

Moreover, if $n$ is a factor of $q'd$, then it is a factor of $r$, and so $\varphi(r)\geq \varphi(n)=\beta$, yielding a contradiction.  This proves that $u n$ and $q'd$ share no common factors in $R_{\infty}$, and by (\ref{Eq:ContradictionEquality}) the same fact is true for any two of the three polynomials $un$, $q'd$, and $r$.

Next, note that $n=x_{\{\beta\},j+1}$ does not appear in any monomial in the support of $d$, since $d\in R_j$.  Let $\psi:R_{\infty}\to R_{\infty}$ be the unique ring homomorphism fixing $F$ and all variables in $R_{\infty}$, except that we take $\psi(n)=0$.  Applying $\psi$ to (\ref{Eq:ContradictionEquality}) we have $-\psi(q')d=\psi(r)$, or in other words $d|\psi(r)$.  As $d\nmid r$, we have $\psi(r)\neq r$, and thus $n$ appears in some irreducible factor $r_1$ of $r$.  This factor must be special, else (\ref{Eq:VarphiDefinition}) and (\ref{Eq:VarphiPrimeSpecial}) would give
\begin{equation}\label{Eq:SpecialFactorTooBig}
\varphi(r)\geq \varphi(r_1)\geq \beta,
\end{equation}
a contradiction.  Thus, we have shown that a special variable occurs in the support of $r$, that special variable's first index contains $\beta$, and its second index is larger than $j$.  Let $q_1:=y_{T_1,k_1,n_1,d_1}$ be a special variable that occurs in the support of either $r$ or $u$, such that $\beta\in T_1$, and $k_1>j$ is maximal with respect to these properties.

Consider what happens if $q_{1}$ occurs in some irreducible factor $r'$ of $r$, but not as a corresponding special variable.  The factor $r'$ cannot be special by maximality of $k_1$, but then we get a contradiction as in (\ref{Eq:SpecialFactorTooBig}).  Therefore, if $q_{1}$ occurs in some irreducible factor of $r$, it must occur in a special prime, and only as the corresponding special variable.  On the other hand, if $q_{1}$ occurs in some irreducible factor of $u$, then since every such factor is special, the maximality of $k_1$ forces $q_{1}$ to be the corresponding special variable.  As $\gcd(u,r)=1$, we see that $q_{1}$ occurs in exactly one prime factor (not counting multiplicity) of $u$ or $r$ (but not both), and only as the corresponding special variable.  Furthermore $k_{1}>j$, and so $q_{1}$ does not appear in $d$.  The only way (\ref{Eq:ContradictionEquality}) can then hold is if $q_{1}$ also appears in $q'$.

First consider the case when $q_{1}$ occurs in $r$.  Write $r=s(n_{1}-q_{1}d_{1})^m $ for some integer $m\geq 1$, maximal with respect to $s\in R_{\infty}$.  Thus, as polynomials in the variable $q_{1}$, the right-hand side of (\ref{Eq:ContradictionEquality}) has leading coefficient $(-1)^md_{1}^{m}s$.  The only term on the left-hand side of (\ref{Eq:ContradictionEquality}) in which $q_{1}$ appears is $q'$, and thus the left-hand side of (\ref{Eq:ContradictionEquality}) has leading term divisible by $d$.  Thus $d|d_{1}^ms$.  As $\gcd(d,r)=1$ and $d$ is irreducible, we have $d|d_{1}$.  This forces $\varphi(d_{1})\geq \varphi(d)$, and hence by (\ref{Eq:VarphiPrimeSpecial}) we have
\[
\varphi(r)\geq \varphi(n_{1}-q_{1}d_{1})\geq \beta
\]
since $\beta\in T_{1}$ and $\beta<\varphi(d)$.  This contradicts the fact $\varphi(r)<\beta$.

Finally consider the case when $q_{1}$ occurs in $u$.  By the same argument as in the previous paragraph, $d|d_{1}$, and so $\varphi(d_{1})\geq \varphi(d)>\beta$.  Thus, by (\ref{Eq:VarphiPrimeSpecial}) and the fact that $\beta\in T_{1}$, we have
\[
\varphi(u)\geq \varphi(n_{1}-q_{1}d_{1})\geq \beta\geq 1,
\]
a final contradiction.
\end{proof}

Now that we know that $\varphi$ is the minimal Euclidean norm, we see that the Euclidean order type of $R$ is
\[
\{\varphi(x) : x\in R\setminus\{0\}\}.
\]
On the one hand, $\varphi(1)=0$ and $\varphi(x_{\{\beta\},0})=\beta$, so this set contains every ordinal less than $\omega^{\alpha}$.  On the other hand, any ordinal that appears in the first index of any of the variables is $<\omega^{\alpha}$.  Thus, by (\ref{Eq:VarphiDefinition}) and (\ref{Eq:VarphiPrimeSpecial}), we see that the Euclidean order type is exactly $\omega^{\alpha}$, finishing the proof of Theorem \ref{Thm:Main}.

We should point out that this construction does more than create a ring with Euclidean order type $\omega^{\alpha}$.  The minimal Euclidean algorithm for this ring is completely effective; given any pair $n,d\in R$ with $d\neq 0$ and $d\nmid n$, the proof of Lemma \ref{Lemma:RisEuclidean} tells us explicitly how to find a quotient $q$ such that the remainder $n-qd$ has smaller norm, assuming we can factor polynomials over $F$.  (Taking $F$ to be a finite field makes factoring of multivariate polynomials completely effective for trivial counting reasons, but of course there are much better algorithms available.)

\section{Euclidean domains and multiplicative norms}\label{Section:Mult}

In this section our goal will be to prove Theorem \ref{Thm:Mult}, that there is a finitely valued Euclidean domain with no multiplicative Euclidean norm.  We follow the construction used in the preceding section, with only the following five changes.

(1) We fix $\alpha=1$, so that the ring will be a finitely valued Euclidean domain.

(2) We restrict $F$ so that it is a field of characteristic $0$.

(3) At the zeroth stage of the construction, we adjoin one more variable $z=z_{\{1\},0}$.

(4) The biggest change is in the definition of $\varphi$.  Instead of (\ref{Eq:VarphiDefinition}) we employ:
\begin{equation}\label{Eq:ModifiedVarphi}
\varphi(r)=\varphi(z^k)\oplus \bigoplus_{i=1}^{n}\varphi(p_i), \text{ where } r=uz^k\prod_{i=1}^n p_i \text{ is a prime factorization, $u\in F\setminus\{0\}$, and }z\nmid p_i,
\end{equation}
and we take $\varphi(z^k)=k^k$ for each integer $k\geq 1$ (as well as $\varphi(z^0)=0$).  Thus, $\varphi$ still satisfies (\ref{Eq:VarphiDefinition}) except on powers of $z$.  We leave the definition of $\varphi$ unchanged on the other primes.

(5) At the recursive stages of the construction we expand the set $S_i$ by allowing pairs of (nonzero) elements $(n,d)\in R_i\times R_i$ still satisfying $\gcd(n,d)=1$ but with $\varphi(n)<\varphi(d)$ if $z|n$.  This produces some new special primes and special variables, which are subject to the same conditions as before.

With these modification in place, Lemma \ref{Lemma:RisEuclidean} still holds, with the following minor changes to the proof.  (In this paragraph, all notations follow those introduced in that proof.)  No change is needed when (\ref{Eq:FirstLemma1}) holds, so we only need to consider the situation when $\varphi(n)\geq \varphi(d)$ and yet $\varphi(n')<\varphi(d')$.  That can only happen if $z|r$ and $z|n'$, since $\varphi(z^{i+j})\geq \varphi(z^i)+\varphi(z^j)$ for any integers $i,j\geq 0$.  In that case there is still a special variable $q=y_{T,i,n',d'}$ due to the changes we made to the set $S_i$.  Moreover, $z\nmid d'$ (since $\gcd(n',d')=1$) and hence $z\nmid (n'-qd')$.  Thus, by (\ref{Eq:ModifiedVarphi}) the equation (\ref{Eq:FirstLemma2}) is unchanged and the lemma is complete.

Next, we will show that Lemma \ref{Lemma:RhasMinEuclidNorm} is still valid.  (Again, in this paragraph, all notations follow those introduced in that lemma.)  In the second paragraph of the proof of that lemma, when we attempt to reduce to the case where $d$ is irreducible by using (\ref{Eq:VarphiDefinition}), we must now instead use (\ref{Eq:ModifiedVarphi}), and so there is also the possibility that $d=z^k$ for some $k\geq 2$.  We only need to deal with this new case, as the proof in the original case may be left unchanged.  The proof proceeds as before until we reach the point where $d|d_{1}^m$.  (Further, we will only handle the case where $q_1$ occurs in $r$, as the case when $q_1$ occurs in $u$ is similar.)  From $d|d_1^m$ we can only conclude $z|d_1$ (at least initially).  View (\ref{Eq:ContradictionEquality}) once again as a polynomial equation in the variable $q_1$.  The coefficient of $q_1$ (i.e., the coefficient of the degree $1$ term) on the right-hand side is $-mn_{1}^{m-1}d_1s$.  The coefficient of $q_1$ on the left-hand side of (\ref{Eq:ContradictionEquality}) is again divisible by $d$.  Thus, since $m\neq 0$ (using the fact that $F$ has characteristic $0$) we have $d|(n_1^{m-1}d_1s)$.  Since $z|d_1$ and $\gcd(n_1,d_1)=1$, we must have $d|d_1$.  The remainder of the proof is unchanged.

Now that we know $\varphi$ is the minimal Euclidean norm, we are ready to prove that there is no multiplicative Euclidean norm on $R$.  Let $\psi:R\setminus\{0\}\to \Z_{\geq 0}$ be any Euclidean norm on $R$ with codomain $\Z_{\geq 0}$.  Set $k:=\psi(z)$, and notice that since $z$ is not a unit, we must have $k\geq 1$.  Fix $\ell:=k+1$, so that $k^{\ell}<\ell^{\ell}$.  If $\psi$ were multiplicative, we then would have
\[
\psi(z^{\ell})=\psi(z)^{\ell}=k^{\ell}<\ell^{\ell}=\varphi(z^{\ell}),
\]
contradicting the fact that $\varphi$ is the minimal Euclidean norm.

\begin{remark}
Lenstra in \cite{Lenstra} uses a slightly different definition of multiplicative norm, by allowing values in the real numbers.  To be precise, let us say that a map $\psi:R\setminus\{0\}\to \R$ is a \emph{real-multiplicative norm} if the following three conditions hold:
\begin{itemize}
\item[(1)] the image of $\psi$ is well-ordered (under the usual ordering on $\R$),
\item[(2)] for each $n,d\in R$ with $d\neq 0$ either $d|n$ or there exists some $q\in R$ with $\psi(n-qd)<\psi(d)$, and
\item[(3)] for each $x,y\in R\setminus\{0\}$ we have $\psi(xy)=\psi(x)\psi(y)$.
\end{itemize}

This notion is more inclusive, as it allows for more general order types in the image.  There are many multiplicative submonoids of $\R$ that are well-ordered but not of order type $\omega$.

We were unable to prove that there is a finitely valued Euclidean domain without a real-multiplicative norm.  Even more generally, one might ask: If $R$ is a Euclidean domain, then is there is some Euclidean norm $\psi:R\setminus\{0\}\to \Ord$ and some binary operation $\circledast$ on $\Ord$, such that under the canonical ordering $<$ of $\Ord$ the structure $(\Ord,\circledast,<)$ is a commutative, cancellative, strictly well-ordered monoid, and $\psi(xy)=\psi(x)\circledast \psi(y)$ for each $x,y\in R\setminus\{0\}$?

The ring we constructed in this section has such a norm, taking $\circledast$ to be the Hessenberg sum operation $\oplus$ on $\Ord$, and taking $\psi=\varphi$ on prime powers except that $\psi(z^k)=\omega k$.
\end{remark}

\begin{remark}
Lenstra observed in \cite{Lenstra}, on page 13, that for all (then) known finitely valued Euclidean domains $R$ with minimal Euclidean norm $\tau$, there exists some $k\in \Z_{\geq 0}$ such that
\[
\tau(x)+\tau(y)\leq \tau(xy)\leq \tau(x)+\tau(y)+k
\]
for all $x,y\in R\setminus\{0\}$.  The ring we constructed in this section fails this condition by taking $x=y=z^{\ell}$ (for sufficiently large $\ell$, depending on $k$).
\end{remark}

\section{Additional consequences}\label{Section:End}

A number of interesting problems are posed in Section 5 of the survey article \cite{Clark}.  Question 5.4 asks whether there are Euclidean domains $R_1$ and $R_2$ such that (in our notation) $\rho(R_1)\geq \rho(R_2)$ but $\rho(R_1)+\rho(R_2)<\rho(R_1)\oplus \rho(R_2)$.  By Corollary \ref{Cor:TEDindecomp} we see that the answer is negative, since only indecomposable ordinal types are possible.

The methods of our paper also completely answer Questions 5.6 of \cite{Clark}, which asks what ordinals $\rho(R)$ are possible when $R$ is any commutative ring.  Indeed, our work classifies $\rho(R)$ for all Euclidean \emph{rings}, i.e., those commutative rings satisfying the Euclidean condition but not necessarily domains.  First, any such ring is a finite direct product of Euclidean domains and special Artinian principal rings by work found in \cite{Lenstra} or \cite{Clark}.  Lenstra, in \cite[Corollary 2.3]{Lenstra}, classifies the minimal Euclidean norm of such a product as the Hessenberg sum of the minimal norms on each factor (which, coincidentally, also completely answers Question 5.3 of \cite{Clark}).  We then see that $\rho(R)$ can equal \emph{any} ordinal, even restricting $R$ to be a finite direct product of Euclidean domains.

Pete Clark, in a personal communication, pointed out a nice further application of \cite[Corollary 2.3]{Lenstra}.  Namely, Lemma \ref{LenstraLemma} is true if we work in a Euclidean ring, but restrict $x$ and $y$ to be non-zero-divisors.  (Here is the quick sketch: View $R$ as a direct product of certain rings, as in the previous paragraph.  Being a non-zero-divisor in a special Artinian principal ring means it is a unit, having minimal norm $0$.  Now apply Corollary 2.3 from \cite{Lenstra}.)

\section{Acknowledgements}

The first author would like to thank Rod Downey and Asher Kach for introducing him to the problems answered in this paper.  We also thank Pete Clark for comments on an earlier version of the paper, and the anonymous referee for valuable comments which simplified the paper.  The project was sponsored by the National Security Agency under Grant Number H98230-16-1-0048.

\providecommand{\bysame}{\leavevmode\hbox to3em{\hrulefill}\thinspace}
\providecommand{\MR}{\relax\ifhmode\unskip\space\fi MR }
\providecommand{\MRhref}[2]{%
  \href{http://www.ams.org/mathscinet-getitem?mr=#1}{#2}
}
\providecommand{\href}[2]{#2}

\end{document}